\newcommand{\beq}{\begin{equation}}
\newcommand{\eeq}{\end{equation}}
\newtheorem{theorem}{Theorem}
\newtheorem{remark}{Remark}
\newtheorem{lemma}{Lemma}
\newtheorem{corollary}{Corollary}
\newtheorem{proposition}{Proposition}
\newtheorem{definition}{Definition}
\def\ba{\begin{array}}
\def\ea{\end{array}}
\newcommand{\conp}{\stackrel{p}{\rightarrow}}
\begin{document}
\pagestyle{myheadings} \markboth{Diversity and its Estimation}{Diversity and its Estimation}\markright{Diversity and its Estimation}

\title{Entropic Representation and Estimation of Diversity Indices}
\author{ Zhiyi Zhang\footnote{E-mail address: zzhang@uncc.edu} and Michael Grabchak\footnote{E-mail address: mgrabcha@uncc.edu}
\\
Department of Mathematics and Statistics \\ University of North Carolina at Charlotte\\
Charlotte, NC 28223 }
\date{}

\maketitle
\begin{abstract}
This paper serves a twofold purpose. First, a unified perspective on diversity indices is introduced based on an entropic basis. It is shown that the class of all linear combinations of the entropic basis, referred to as the class of linear diversity indices, covers a wide range of diversity indices used in the literature. Second, a class of estimators for linear diversity indices is proposed and it is shown that these estimators have rapidly decaying biases and asymptotic normality.\\

\noindent\textbf{keywords:} diversity indices; entropy; entropic basis; nonparametric estimation; asymptotic normality.
\end{abstract}

\section{Introduction.}

Diversity is an important notion in a variety of scientific disciplines.  Historically, the interest stems from ecological applications, where the diversity of species in an ecosystem is a relevant issue.  Other applications include cancer research, where the interest is in the diversity of types of cancer cells in a tumor, and linguistics,  where it is in the diversity of an author's vocabulary.  More generally, in information science, one is interested in the diversity of letters drawn from some alphabet. While there is little controversy on the literal meaning of the word ``diversity'', the best way to quantify this concept is a matter of some dispute. For this reason many indices of diversity have been proposed in the literature.  Perhaps the earliest are Shannon's entropy introduced in Shannon (1948) and Simpson's index introduced in Simpson (1949). Many other popular diversity indices have been proposed, including Emlen's index, the Gini-Simpson index, Hill's diversity number, R\'{e}nyi's entropy, and 
Tsallis entropy. These are, respectively, named after the authors of Emlen (1973), Gini (1912), Hill (1973), R\'{e}nyi (1961), and Tsallis (1988). Comprehensive discussions are offered by many, see {\it e.g.}, Magurran (1988) and Marcon (2013). It is quite clear that, when it comes to the question of what constitutes a mathematical diversity index, the ideas are diverse. This paper offers a unified perspective for all of the above mentioned diversity indices based on a re-parameterization and establishes a 
general nonparametric estimation procedure for these indices.

In the following section we give a general discussion of diversity indices.  In Section \ref{sec: Entropic Basis} we show that all diversity indices (satisfying very general regularity conditions) are transforms of a relatively simple class of diversity indices, which we call the entropic basis. Further, we show that most diversity indices used in the literature are either linear combinations (possibly infinite) of the entropic basis or equivalent to such indices.  In Section \ref{sec: estimation} we give a general non-parametric approach for estimating such linear diversity indices. We derive an estimator, which, under general conditions, is consistent, asymptotically normal, and has a bias that decays exponentially fast. In Section \ref{sec: estimate Renyi} we show how to extend these results to R\'enyi's entropy, which is not a linear diversity index, but only equivalent to one.

\section{Preliminaries}

Let the species in a population be denoted by letters of a countable alphabet $\mathscr{X}=\{\ell_{k};k\geq 1\}$ and let the abundances of the species be represented by a probability distribution $\mathbf{p}=\{p_{k};k\geq1\}$ associated with these letters. For simplicity of notation, and when there is no risk of ambiguity, let $K=\sum_{k\geq1}1[p_{k}>0]$, where $1[\cdot]$ is the indicator function, be referred to as the cardinality of $\mathscr{X}$, which may be finite or countably infinite.

An index of diversity $\theta=\theta(\mathscr X,\mathbf p)$ is a function taking the alphabet $\mathscr X$ and the probability distribution $\mathbf p$ into the extended real line $[-\infty,\infty]$. What additional conditions $\theta$ must satisfy is a matter of opinion or need depending on the particular population features of interest. The discussion of this paper starts with the following set of axioms:
\beq
    \begin{array}{rcl}\nonumber
    A_{01}:& & \mbox{$\theta$ is fully determined by $\mathbf{p}=\{p_{k};k\ge1\}$.} \\
    A_{02}:& & \mbox{$\theta$ is invariant under any permutation on the index set $\{k;k\geq 1\}$.}
    \end{array}
\eeq
We will refer to these axioms as $\mathscr{A}_{0}=\{A_{01},A_{02}\}$. Note that Axiom $A_{01}$ implies that $\theta=\theta(\mathscr X,\mathbf p)=\theta(\mathbf p)$ does not depend on $\mathscr X$. The axiomatic set $\mathscr{A}_{0}$ is by no means sufficient for a meaningful diversity index, but it provides a minimal constraint for further discussion. Most, if not all, popular diversity indices studied in the literature satisfy $\mathscr{A}_{0}$. These include: 
\begin{enumerate}
 \item Simpson's index $\lambda=\sum_{k\geq 1}p_{k}^{2}$,
 \item Gini-Simpson index $1-\lambda=\sum_{k\geq 1}p_{k}(1-p_{k})$,
 \item Shannon's entropy $H=-\sum_{k\geq 1}p_{k}\ln(p_{k})$,
 \item R\'{e}nyi's entropy $H_{r}=(1-r)^{-1}\ln\left(\sum_{k\geq 1}p_{k}^{r}\right)$ for any $r>0, r\ne1$,
 \item Tsallis' entropy $T_{r}=(1-r)^{-1}\left(\sum_{k\geq 1}p_{k}^{r}-1\right)$ for any $r>0, r\ne1$
 \item Hill's diversity number $N_{r}=\left(\sum_{k\geq 1}p_{k}^{r}\right)^{1/(1-r)}$ for any $r>0, r\ne1$,
 \item Emlen's index $D=\sum_{k\ge1}p_ke^{-p_k}$, and
 \item the richness index $K=\sum_{k\geq 1}1[p_{k}>0]$.
\end{enumerate}

Given the abundance of definitions of various diversity indices, let us define a notion of equivalence between two diversity indices. 

\begin{definition}\label{defn: equiv} Two diversity indices, $\theta_{1}$ and $\theta_{2}$, are said to be equivalent if and only if there exists a strictly increasing function $g$ such that $\theta_{1}=g(\theta_{2})$. The equivalence is denoted by $\theta_{1}\Leftrightarrow \theta_{2}$.
\end{definition}

Noting that if $g$ is strictly increasing then so is its inverse $g^{-1}$, it is clear that the above definition is symmetric with respect to $\theta_{1}$ and $\theta_{2}$. Two diversity indices are equivalent if they agree on whether one population is more diverse than another, but they may not agree on what the actual difference is.  With this rather trivial notion of equivalence much of the superficial redundance among various indices in the literature can be erased. For example, R\'{e}nyi's entropy $H_r$, Tsallis entropy $T_r$, and Hill's diversity number $N_r$ are equivalent to each other. Further, for $r\in(0,1)$ they are equivalent to a core index 
\beq h_r = \sum_{k\geq 1}p_{k}^{r}, \ \ r>0, r\ne1,
\label{renyieqiv}
\eeq
which we will refer to as {\it R\'{e}nyi's equivalent entropy}.  Although these indices are not equivalent to $h_r$ for $r>1$, they are nevertheless continuous transformations of $h_r$. This fact will be useful for statistical estimation, see Section \ref{sec: estimate Renyi} below.

\section{Entropic Basis}\label{sec: Entropic Basis}

In this section we give our first main result.  We introduce a class of diversity indices and show that they are the building blocks from which all diversity indices satisfying the axioms of $\mathscr A_0$ are made. For a given $\mathbf{p}=\{p_{k};k\ge1\}$ and an integer pair $(u\geq 1, v\geq 0)$, let $\zeta_{u,v}=\sum_{k\geq 1}p_{k}^{u}(1-p_{k})^{v}$. We will refer to
\beq
       \boldsymbol\zeta=\{\zeta_{u,v};u\geq 1, v\geq 0\}
\label{zetauv}
\eeq  as the family of the {\it generalized Simpson's diversity indices} after Zhang and Zhou (2010). Furthermore we will refer the sub-family
\beq
\boldsymbol\zeta_{1}=\left\{\zeta_{1,v};v\geq 0\right\}
\label{zeta1v}
\eeq as the {\it entropic basis} with regard to $\mathscr{X}$ and $\mathbf{p}$.

\begin{remark}According to Zhang and Zhou (2010), every member of $\boldsymbol\zeta$ can be expressed as
$\zeta_{u,v}=\sum_{k\geq 1}\left[p_{k}^{u-1}\right]\left[p_{k}(1-p_{k})\right]\left[(1-p_{k})^{v-1}\right]$, which may be regarded as a weighted version of the Gini-Simpson diversity index, with weights $\left[p_{k}^{u-1}\right]\left[(1-p_{k})^{v-1}\right]$ for various choices of $u$ and $v$ at the user's discretion, and hence the term  {\it generalized Simpson's diversity indices}.
\end{remark}

\begin{remark}
Zhang (2012) established an alternative representation of Shannon's entropy $H=\sum_{v=1}^{\infty}v^{-1}\zeta_{1,v}$ provided that $H<\infty$. This is a linear form of $\boldsymbol\zeta_{1}$, hence the term {\it entropic basis}.
\end{remark}

We now show that any diversity index satisfying the axioms of $\mathscr{A}_{0}$ must be a function of $\zeta_{1,v}$, for $v=0,1,\dots$, {\it i.e.} of the members of the entropic basis.

\begin{theorem} Given an alphabet $\mathscr{X}=\{\ell_{k}; k\geq 1\}$ and an associated probability distribution $\mathbf{p}=\{p_{k}; k\geq 1\}$, a diversity index $\theta$ satisfying $\mathscr{A}_{0}$ is fully 
determined by the entropic basis $\boldsymbol\zeta_{1}=\left\{\zeta_{1,v};v\geq 0\right\}$.
\label{th1}
\end{theorem}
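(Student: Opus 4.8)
The plan is to reduce the theorem to a uniqueness claim about the multiset of probability values, and then to settle that claim via the classical moment problem on a bounded interval. By Axiom $A_{02}$, $\theta$ is invariant under permutations of the index set, and by $A_{01}$ it depends on $\mathbf p$ alone; hence $\theta$ is in fact a function of the \emph{multiset} of positive probabilities $\{p_k : p_k>0\}$, which I record through the multiplicity function $m(x)=\#\{k : p_k=x\}$. It therefore suffices to prove that the values $\{\zeta_{1,v}; v\geq 0\}$ determine this multiset: if two distributions $\mathbf p$ and $\mathbf q$ satisfy $\zeta_{1,v}(\mathbf p)=\zeta_{1,v}(\mathbf q)$ for every $v\geq 0$, then one is a permutation of the other, and consequently $\theta(\mathbf p)=\theta(\mathbf q)$.

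The key observation is that the entropic basis consists exactly of the moments of an associated probability measure. I would define $\nu=\sum_{k\geq 1}p_k\,\delta_{1-p_k}$, a measure on $[0,1]$; since $p_k>0$ forces $1-p_k<1$, it is supported in $[0,1)$, and its total mass is $\zeta_{1,0}=\sum_k p_k=1$, so $\nu$ is a probability measure. A direct computation then gives $\int_{[0,1]}t^v\,d\nu(t)=\sum_{k\geq 1}p_k(1-p_k)^v=\zeta_{1,v}$, so that the $v$th member of the entropic basis is precisely the $v$th moment of $\nu$.

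With this identification the argument proceeds in two steps. First, because $\nu$ is supported on the compact interval $[0,1]$, it is uniquely determined by its moments: polynomials are dense in $C[0,1]$ by the Stone--Weierstrass theorem, so equality of all moments forces $\int f\,d\nu_1=\int f\,d\nu_2$ for every continuous $f$, whence $\nu_1=\nu_2$. Thus $\{\zeta_{1,v}; v\geq 0\}$ determines $\nu$ completely. Second, I would recover the multiset $\{p_k\}$ from $\nu$: each atom of $\nu$ sits at a point $t=1-x$ with $x>0$ and carries mass $\nu(\{t\})=m(x)\,x$, so the multiplicity is read off as $m(x)=\nu(\{1-x\})/x$. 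This reconstructs $m$, hence the multiset, completing the reduction.

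The main obstacle is the uniqueness step, i.e. showing that the full sequence of moments pins down $\nu$. This is exactly the determinacy of the Hausdorff moment problem, which holds precisely because the support is bounded; the boundedness (a consequence of $0<p_k\leq 1$) is what makes Stone--Weierstrass applicable and rules out the nonuniqueness phenomena that afflict moment problems on unbounded domains. The remaining bookkeeping --- verifying the moment identity and inverting $m(x)=\nu(\{1-x\})/x$ --- is routine, and no difficulty arises from an infinite alphabet, since moment determinacy on $[0,1]$ is insensitive to whether $\nu$ has finitely or countably many atoms.
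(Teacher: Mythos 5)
Your proof is correct, but it takes a genuinely different route from the paper's. The paper reduces the theorem to two ingredients: (i) the cited Lemma~\ref{lemma1} of Zhang and Zhou (2010), which asserts that the full family $\boldsymbol\zeta=\{\zeta_{u,v}\}$ determines $\mathbf p$ up to permutation, and (ii) an explicit binomial expansion $p_k^{u-1}=[1-(1-p_k)]^{u-1}$ showing that every $\zeta_{u,v}$ is a finite alternating linear combination $\sum_{i=0}^{u-1}(-1)^i\binom{u-1}{i}\zeta_{1,v+i}$ of entropic-basis elements, so that $\boldsymbol\zeta_1$ already determines all of $\boldsymbol\zeta$. You instead bypass both steps and prove the injectivity claim directly: you identify $\zeta_{1,v}$ as the $v$th moment of the probability measure $\nu=\sum_k p_k\,\delta_{1-p_k}$ on $[0,1]$, invoke determinacy of the Hausdorff moment problem (via Stone--Weierstrass on a compact interval) to conclude that the moment sequence pins down $\nu$, and then read off the multiplicity function $m(x)=\nu(\{1-x\})/x$ to recover the multiset of positive probabilities. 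Your argument has the advantage of being self-contained --- it does not lean on an external lemma --- and it makes transparent \emph{why} the entropic basis is a faithful re-parameterization: it is literally a moment sequence of a compactly supported measure, so determinacy is automatic. What the paper's route buys in exchange is the explicit finite linear formula expressing $\zeta_{u,v}$ in terms of $\boldsymbol\zeta_1$, which is reused later (e.g.\ in the table of linear diversity indices for the generalized Simpson's index) and is needed for the estimation theory; your moment-problem argument establishes existence of a determination but not that constructive finite-dimensional relation. One small point to keep in mind in your reduction: strictly speaking $\nu$ only sees the positive probabilities, so you are recovering $\mathbf p$ up to permutation only modulo the padding by zero entries; this is the same convention the paper adopts in Lemma~\ref{lemma1} (``up to a permutation of the index set''), and it is harmless under $\mathscr A_0$, but it is worth stating explicitly.
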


A proof of Theorem \ref{th1} requires the following lemma due to Zhang and Zhou (2010).

\begin{lemma} The distribution $\mathbf{p} = \{p_k;k\ge1\}$ on $\mathscr{X}$ and the family of generalized Simpson's diversity indices $\boldsymbol\zeta=\left\{\zeta_{u,v};u\geq 1, v\geq 0\right\}$ uniquely determine each other up to a permutation of the index set $\{k;k\geq 1\}$.
\label{lemma1}
\end{lemma}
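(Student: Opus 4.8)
The statement asserts two directions. The forward direction is immediate and I would dispose of it in a sentence: each $\zeta_{u,v}=\sum_{k\ge1}p_k^{u}(1-p_k)^v$ is manifestly invariant under permutations of the index set $\{k;k\ge1\}$, so any $\mathbf p$ (hence $\mathbf p$ read only up to permutation) determines the whole family $\boldsymbol\zeta$. All the content is in the converse, namely recovering the multiset of values $\{p_k\}$ from $\boldsymbol\zeta$, so my plan concentrates there.

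The key device I would introduce is the Borel measure $\nu=\sum_{k\ge1}p_k\,\delta_{p_k}$ on $[0,1]$, where $\delta_x$ denotes a unit point mass at $x$; this is a probability measure because $\sum_k p_k=1$. The purpose of this change of viewpoint is the identity
\[
\zeta_{u,v}=\sum_{k\ge1}p_k^{\,u}(1-p_k)^v=\int_{[0,1]}x^{\,u-1}(1-x)^v\,d\nu(x),
\]
obtained by absorbing one power of $p_k$ into the measure. Taking $v=0$ gives $\zeta_{u,0}=\int_{[0,1]}x^{\,u-1}\,d\nu(x)$, so the numbers $\{\zeta_{u,0};u\ge1\}$ are precisely the moments of $\nu$ (with $\zeta_{1,0}=1$ recording the total mass). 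Thus knowing $\boldsymbol\zeta$, indeed just its subfamily with $v=0$, determines every moment of $\nu$.

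I would then invoke the uniqueness part of the Hausdorff moment problem: a Borel probability measure supported on the compact interval $[0,1]$ is uniquely determined by its moment sequence. Hence $\nu$ is recovered from $\boldsymbol\zeta$. The last step is to read the multiset $\{p_k\}$ off of $\nu$. Since $\mathbf p$ is discrete, $\nu$ is purely atomic with every atom located in $(0,1]$, and an atom at a location $x>0$ carries mass $\nu(\{x\})=x\cdot m_x$, where $m_x=\#\{k:p_k=x\}$ is the multiplicity of the value $x$ in $\mathbf p$. Dividing the mass at each atom by its location recovers each multiplicity, and therefore the full multiset of values, i.e.\ $\mathbf p$ up to a permutation of $\{k;k\ge1\}$.

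The main obstacle, and the only genuinely analytic step, is the moment-determinacy claim; this is exactly where compactness of $[0,1]$ is essential, since on an unbounded domain a moment sequence need not determine the measure. The remaining points are routine bookkeeping that I would still verify: the integral identity above, the fact that $\nu$ places no mass at $0$ and has no continuous part (so that the atom-by-atom recovery remains valid even when $\{p_k\}$ is infinite with atoms accumulating at $0$), and the observation that distinct permutations of $\mathbf p$ yield the same $\nu$, which is precisely the ``up to permutation'' qualifier in the statement.
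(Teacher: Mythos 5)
Your proposal is correct, but note that the paper itself offers no proof of this lemma: it is quoted as a known result from Zhang and Zhou (2010), so there is no in-text argument to compare against. Your route through the measure $\nu=\sum_{k\ge1}p_k\delta_{p_k}$ and the uniqueness half of the Hausdorff moment problem is a clean, self-contained justification: the identity $\zeta_{u,v}=\int_{[0,1]}x^{u-1}(1-x)^v\,d\nu(x)$ is right, the moment-determinacy on the compact interval $[0,1]$ follows from Weierstrass approximation plus Riesz representation, and the atom-by-atom recovery (mass $x\,m_x$ at location $x$, with $m_x\le 1/x$ automatically finite) correctly handles the case of infinitely many species with values accumulating at $0$. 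A pleasant by-product of your argument, which the lemma does not claim, is that the subfamily $\{\zeta_{u,0};u\ge1\}$ alone, i.e.\ the integer power sums $\sum_k p_k^u$, already determines $\mathbf p$; the same conclusion can also be reached more elementarily by a peeling argument ($\max_k p_k=\lim_{u\to\infty}\zeta_{u,0}^{1/u}$, recover its multiplicity from $\lim_u \zeta_{u,0}/p_{(1)}^u$, subtract, and recurse), which avoids the moment problem entirely. One boundary point worth a sentence in a careful write-up: your construction recovers only the multiset of \emph{positive} values $p_k$, so the ``up to permutation'' conclusion should be read as up to permutation and the placement of zero-probability letters; this ambiguity is inherent in the statement itself and not a defect of your argument.
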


\begin{proof}[Proof of Theorem \ref{th1}.] By Lemma \ref{lemma1}, $\boldsymbol\zeta$ determines $\mathbf{p}$ up to a permutation, and this fully determines a diversity index $\theta$ satisfying $\mathscr A_{0}$.  
It remains to show that every element of $\boldsymbol\zeta$ is fully determined by $\boldsymbol\zeta_{1}$. Toward that end, we note that for any pair of fixed integers $u\geq 2$ and $v\geq0$
\begin{eqnarray*}
    \zeta_{u,v} &=& \sum_{k\geq 1}p_{k}[1-(1-p_{k})]^{u-1}(1-p_{k})^{v} \\
       &=& \sum_{k\geq 1}p_{k}\left[\sum_{i=0}^{u-1}(-1)^{i}\binom{u-1}{i}(1-p_{k})^{i}\right](1-p_{k})^{v} \\
           &=& \sum_{i=0}^{u-1}(-1)^{i}\binom{u-1}{i}\left[\sum_{k\geq 1}p_{k}(1-p_{k})^{v+i}\right] \\
       &=& \sum_{i=0}^{u-1}(-1)^{i}\binom{u-1}{i}\zeta_{1,v+i},
\end{eqnarray*}
which completes the proof.
\end{proof}

The statement of Theorem \ref{th1} holds true for any probability distribution $\mathbf{p}$ regardless of whether $K$ is finite or infinite. Theorem \ref{th1} essentially offers a re-parameterization of $\mathbf p$ (up to a permutation) in terms of $\boldsymbol\zeta_{1}$. This re-parameterization is not just an arbitrary one, it has several statistical implications. First of all, every element of $\boldsymbol\zeta_1$ contains information about the entire distribution and not just one frequency $p_k$. This helps to deal with the problem of estimating probabilities of unobserved species. Second, for a random sample of size $n$, there are very good estimators of $\zeta_{1,v}$ for $v=0,1,2,\dots,n-1$. These are given in Zhang and Zhou (2010) and are discussed below.

While, in general, a diversity index can be any transformation of the entropic basis, in practice, most commonly used indices correspond to transformations of a fairly simple form. Most diversity indices either belong to, or are equivalent to ones that belong to the following class.

\begin{definition}\label{defn: linear div index}
 A diversity index $\theta$ is said to be a linear diversity index if it is a linear combination of the elements of the entropic basis, {\it i.e.} 
\begin{eqnarray}\label{eq: lin div index}
\theta = \theta(\mathbf p) = \sum_{v=0}^\infty w_v \zeta_{1,v} = \sum_{v=0}^\infty w_v \sum_{k\ge1} p_k(1-p_k)^v
\end{eqnarray}
for any choice of weights $w_v$ such that, for every $\mathbf p$, the sum either converges or diverges to $\pm\infty$.
\end{definition}
Definition \ref{defn: linear div index} essentially encircles a sub-class of indices among all functions of $\boldsymbol\zeta_{1}$, {\it i.e.}, all diversity indices satisfying $\mathscr{A}_{0}$.
While there are no fundamental reasons why a search of a good diversity index should be restricted to this sub-class, it happens to cover all of the popular indices that we have come across in the literature, up to the equivalence relationship given in Definition \ref{defn: equiv}. These include:
\[ \begin{array}{rrclcl}
    \mbox{Simpson's index:} & \lambda &=& \sum_{k\geq 1}p_{k}^{2}&=&\zeta_{1,0}-\zeta_{1,1} \\ &&& \\
    \mbox{Gini-Simpson index:} & 1-\lambda &=&\sum_{k\geq 1}p_{k}(1-p_{k})&=& \zeta_{1,1} \\ &&& \\
    \mbox{Shannon's entropy:} &  H&=& -\sum_{k\geq 1}p_{k}\ln(p_{k})&=&\sum_{v= 1}^{\infty}\frac{1}{v}\hspace{0.2em}\zeta_{1,v} \\  &&& \\
    \mbox{R\'{e}nyi equiv.\ entropy:} &  h_{r}&=& \sum_{k\geq 1}p_{k}^{r}&=&\zeta_{1,0}+\sum_{v=1}^{\infty}\prod_{i=1}^{v}\left(\frac{i-r}{i}\right)\hspace{0.2em}\zeta_{1,v}, \\&&&\\
    \mbox{Emlen's index:} & D &=&\sum_{k\geq 1}p_ke^{-p_k}&=& \sum_{v=0}^\infty \frac{e^{-1}}{v!}\zeta_{1,v}\\&&&\\
 \mbox{Richness index:} & K&=&\sum_{k\geq 1}1[p_{k}>0] &=&  \sum_{v=0}^\infty \zeta_{1,v}\\&&&\\
    \mbox{Gen.\ 
    Simpson's index:} &  \zeta_{u,\hspace{0.2em}m} &=& \sum_{k\geq 1}p_k^u(1-p_k)^m&=&\sum_{v=0}^{u-1}(-1)^{v}\binom{u-1}{v}\hspace{0.2em}\zeta_{1,\hspace{0.2em}m+v}.
\end{array}
\] 
Note that $\zeta_{1,0}=\sum_{k\ge1}p_k(1-p_k)^0\equiv1$. We also note that Tsallis' entropy is a linear diversity index.  The form of its weights are very similar to those of R\'enyi's equivalent entropy. All of the representations above can be verified using Taylor expansions.  For example, for the richness index, which is the total number of species in a population, we have
\begin{eqnarray*}
    K&=& \sum_{k\geq 1}1[p_{k}>0]\frac{p_{k}}{1-(1-p_{k})}\\
    &=&\sum_{k\geq 1}1[p_{k}>0]p_{k}\sum_{v=0}^{\infty}(1-p_{k})^{v} \\
    &=& \sum_{v=0}^{\infty}\sum_{k\geq 1}1[p_{k}>0]p_{k}(1-p_{k})^{v}=\sum_{v=0}^{\infty}\zeta_{1,v}.
 \end{eqnarray*}

It is not difficult to see that all linear diversity indices discussed above are of the general form
\begin{eqnarray}\label{eq: alt lin div index}
\theta = \sum_{k\ge1} p_k h(p_k),
\end{eqnarray}
where $h$ has a Taylor expansion around $1$ with radius of convergence at least $1$. This is not a coincidence. If $\theta$ satisfies \eqref{eq: lin div index} and $h(t) =  \sum_{v=0}^\infty w_v (1-t)^v$ then $\theta$ is necessarily of the form \eqref{eq: alt lin div index}, and, of course, the converse of this statement holds.

\begin{remark}
 In the literature of diversity indices, it is generally thought that the richness indices, {\it e.g.}, $K$, and the evenness indices, {\it e.g.}, Gini-Simpson's $1-\lambda$, are two qualitatively different types of indices, see {\it e.g.}, Peet (1974) and Heip, Herman, and Soetaert (1998). It may be interesting to note that, in the perspective of the entropic basis, they are both linear diversity indices and merely differ in the weighting scheme $h(p)$ in (\ref{eq: alt lin div index}).
\end{remark}

\section{Estimation of Linear Diversity Indices.}\label{sec: estimation}

In this section we discuss nonparametric estimation of linear diversity indices.  Assume that $X_1,X_2,\dots,X_n$ are independent and identically distributed (iid) from $\mathscr X$ according to $\mathbf p$. We want to estimate
\begin{eqnarray}\label{eq: theta}
\theta=\theta(\mathbf p) = \sum_{v=0}^\infty w_v \zeta_{1,v} = \sum_{v=0}^\infty w_v \sum_{k\ge1} p_k(1-p_k)^v.
\end{eqnarray}
We assume that $\{w_v:v\ge0\}$ has been chosen but that $\mathbf p=\{p_k:k\ge1\}$ 
is unknown. We will make the following assumptions:
\begin{enumerate}
\item there is an $M>0$ such that $|w_v|\le M$ for all $v\ge0$, and
\item $K<\infty$.
\end{enumerate}
These conditions guarantee that the sum in \eqref{eq: theta} always converges.  Note that the assumption that $|w_n|\le M$ is satisfied by all of the linear diversity indices discussed in Section \ref{sec: Entropic Basis}.
Note further that we are not assuming that $K$ is known, only that it is known that $K<\infty$.  This is realistic in many applications including ecology, where there is a finite (even if very large) number of species.

For simplicity of notation assume the frequencies $p_k$ are ordered such $p_k>0$ for $k=1,2,\dots,K$ and that $p_k=0$ for $k>K$. For $t\in[0,1]$ let
$$
h(t) = \left\{\begin{array}{ll}
  \sum_{v=0}^\infty w_v  (1-t)^v & \mbox{if }t\in(0,1]\\
0& \mbox{if }t=0
\end{array}\right..
$$
In this case $w_v = (-1)^vh^{(v)}(1)/v!$ and
$$
\theta=\sum_{k=1}^K p_k h(p_k).
$$

Let $\{x_k=\sum_{i=1}^n 1[X_i=\ell_k]\}$ be the sequence of observed counts in our sample and let $\{\hat p_k = x_k/n\}$ be the sample proportions. Perhaps the most intuitive estimator of $\theta$ is the plug-in estimator given by
\begin{eqnarray}\label{eq: plug in}
\hat \theta_n = \sum_{v=0}^\infty w_v \sum_{k=1}^K \hat p_k(1-\hat p_k)^v = \sum_{k=1}^K \hat p_k h(\hat p_k).
\end{eqnarray}
However, it is well known that, in many important situations, the plug-in estimator has a bias that decays very slowly. For instance, in the case of Shannon's entropy ({\it i.e.}, when $w_0=0$ and $w_v=1/v$ for $v\ge1$), the bias decays no faster that $\mathcal O(1/n)$, see {\it e.g.}, Paninski (2003). We now propose another estimator, which has a bias that always decays at least exponentially fast. Our approach is influenced by the estimator of Shannon's entropy derived in Zhang (2012).

First note that
\begin{eqnarray}\label{eq: second term}
\theta = w_0+\sum_{v=1}^{n-1} w_v \sum_{k=1}^K p_k (1-p_k)^v+\sum_{v=n}^\infty w_v \sum_{k=1}^K p_k (1-p_k)^v =: \eta_{n}+B_{2,n}.
\end{eqnarray}
From Zhang and Zhou (2010), we know that an unbiased estimator of $\eta_n$ is given by
\begin{eqnarray}\label{eq: estimator}
\hat \theta_n^\sharp &=& w_0+\sum_{v=1}^{n-1} w_v \sum_{k=1}^K \hat p_k \prod_{j=1}^{v}\left(1-\frac{x_k-1}{n-j}\right)\nonumber\\
&=& w_0+\sum_{k=1}^K \hat p_k \sum_{v=1}^{n-x_k} w_v \prod_{j=1}^{v}\left(1-\frac{x_k-1}{n-j}\right) = w_0+\sum_{k=1}^K \hat \theta^\sharp_{n,k},
\end{eqnarray}
where
\begin{eqnarray}\label{eq: estimator k term}
\hat \theta_{n,k}^\sharp = \hat p_k \sum_{v=1}^{n-x_k} w_v \prod_{j=1}^{v}\left(1-\frac{x_k-1}{n-j}\right)= \hat p_k \sum_{v=1}^{\infty} w_v \prod_{j=1}^{v}\left(1-\frac{x_k-1}{n-j}\right).
\end{eqnarray}
It may, at first, appear that one needs to know $K$ in order to evaluate this estimator. However, if a category $k$ is not observed then $\hat p_k=0$ and hence $\hat\theta_{n,k}^\sharp=0$ and does not need to be included in the sum. Thus one does not need to know the value of $K$ in order to evaluate this estimator. A similar comment holds for the plug-in estimator given in \eqref{eq: plug in}. 

By construction, the bias of the estimator $\hat \theta_n^\sharp$ is given by $B_{2,n}$, and letting $p_\wedge=\min\{p_k:1\le k\le K\}$ we see that
$$
\left|B_{2,n}\right|\le M K(1-p_{\wedge})^n,
$$
which decays exponentially fast in $n$. We note that in the case of Shannon's entropy this estimator corresponds with the estimator introduced in Zhang (2012) and Zhang (2013).  For that estimator, an approach to further reduce the bias was presented in Zhang and Grabchak (2013).  One can modify that approach for our more general situation.  This will be dealt with in a future work.

Next, we will establish that $\hat \theta^\sharp_n$ is a consistent and asymptotically normal estimator of $\theta$. Along the way, we will show the corresponding results for the plug-in estimator $\hat\theta_n$. Our approach is similar to the one used in Zhang (2013) to prove the asymptotic normality of an estimator of Shannon's entropy.

Let us define the $(K-1)$-dimensional vectors
$$
\mathbf v=(p_1,\cdots,p_{K-1})^{\tau} \mbox{ and } \hat {\mathbf v}_n=(\hat{p}_{1},\cdots,\hat{p}_{K-1})^{\tau},
$$
and note that $\hat{\mathbf  v}_n\conp \mathbf  v$ as $n\to\infty$. Moreover, by the multivariate normal approximation to the multinomial distribution
\begin{eqnarray}\label{lemma0}
   \sqrt{n}(\hat {\mathbf  v}_n-\mathbf v)\stackrel{L}{\rightarrow} MVN(0, \Sigma(\mathbf  v)),
\end{eqnarray}
where $\Sigma(\mathbf  v)$ is the $(K-1)\times (K-1)$ covariance matrix given by
\begin{eqnarray}\label{sigmamatrix}
\Sigma(\mathbf  v)=\left(\begin{array}{cccc}
                p_{1}(1-p_{1}) & -p_{1}p_{2}&\cdots & -p_{1}p_{K-1}  \\
                -p_{2}p_{1}& p_{2}(1-p_{2})  &\cdots & -p_{2}p_{K-1} \\
                \cdots&\cdots  &\cdots & \cdots \\
                -p_{K-1}p_{1}&- p_{K-1}p_{2}  &\cdots & p_{K-1}(1-p_{K-1})
                \end{array}\right).
\end{eqnarray}
Let
\begin{eqnarray*}
G(\mathbf v) =\sum_{k=1}^{K-1}p_k h(p_k) + \left(1-\sum_{k=1}^{K-1}p_k\right) h\left(1-\sum_{k=1}^{K-1} p_k\right)
\end{eqnarray*}
and
$$
g(\mathbf v):=\nabla G(\mathbf v) = \left(\frac{\partial}{\partial p_{1}}G(\mathbf v),\cdots,\frac{\partial}{\partial p_{K-1}}G(\mathbf v)\right)^{\tau}.
$$
For each $j$, $j=1,\cdots,K-1$, we have
\begin{eqnarray*}
\frac{\partial}{\partial p_j} G(\mathbf v) &=& h(p_j)+p_jh'(p_j)-h\left(1-\sum_{k=1}^{K-1} p_k\right)-\left(1-\sum_{k=1}^{K-1} p_k\right)h'\left(1-\sum_{k=1}^{K-1} p_k\right).
\end{eqnarray*}
The delta method gives the following result.

\begin{proposition} \label{prop: asymp norm MLE}
If $\hat\theta_n$ is the plug-in estimator given by \eqref{eq: plug in} and  $g^{\tau}(\mathbf v)\Sigma(\mathbf v) g(\mathbf v)>0$ then
\begin{eqnarray*}
\sqrt{n}\left(\hat{\theta}_{n}-\theta\right)\left[ g^{\tau}(\mathbf v)\Sigma(\mathbf v) g(\mathbf v)\right]^{-\frac{1}{2}}\stackrel{L}{\rightarrow} N(0, 1).
\end{eqnarray*}
\end{proposition}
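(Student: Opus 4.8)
The plan is to recognize the plug-in estimator as a smooth function of the multinomial proportion vector and then invoke the multivariate delta method. First I would note that since $\sum_{k=1}^K\hat p_k=1$ we may write $\hat p_K=1-\sum_{k=1}^{K-1}\hat p_k$, so that the plug-in estimator in \eqref{eq: plug in} is exactly $\hat\theta_n=\sum_{k=1}^{K-1}\hat p_k h(\hat p_k)+\hat p_K h(\hat p_K)=G(\hat{\mathbf v}_n)$, while the target can be written as $\theta=G(\mathbf v)$ for the same function $G$ defined above. Thus the whole problem reduces to determining the limiting behavior of $\sqrt n\,(G(\hat{\mathbf v}_n)-G(\mathbf v))$.

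Second, I would verify that $G$ is differentiable at $\mathbf v$ with gradient $g(\mathbf v)=\nabla G(\mathbf v)$. The key point is that, by the assumption $|w_v|\le M$, the power series $h(t)=\sum_{v\ge0}w_v(1-t)^v$ has radius of convergence at least $1$, hence converges for every $t$ with $|1-t|<1$, that is for $t\in(0,2)$; the same bound yields convergence of the formally differentiated series $h'(t)=-\sum_{v\ge1}vw_v(1-t)^{v-1}$ on the same set, and term-by-term differentiation is legitimate inside the radius of convergence. Since each coordinate $p_j$ and the last coordinate $p_K=1-\sum_{k=1}^{K-1}p_k$ lie in $(0,1]\subset(0,2)$ (because $p_k>0$ for $k\le K$ by the definition of $K$), the partial derivatives $\partial G/\partial p_j$ displayed above exist and are continuous at $\mathbf v$, so $G$ is differentiable there.

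Third, I would combine the multinomial central limit theorem \eqref{lemma0}, namely $\sqrt n(\hat{\mathbf v}_n-\mathbf v)\stackrel{L}{\rightarrow}MVN(0,\Sigma(\mathbf v))$, with the multivariate delta method applied to $G$, which yields
\[ \sqrt n\,(\hat\theta_n-\theta)\stackrel{L}{\rightarrow}N\!\left(0,\,g^\tau(\mathbf v)\Sigma(\mathbf v)g(\mathbf v)\right). \]
Finally, under the hypothesis $g^\tau(\mathbf v)\Sigma(\mathbf v)g(\mathbf v)>0$ the limiting variance is a strictly positive constant, so dividing by its square root (a deterministic scalar, so that Slutsky's theorem applies trivially) produces the standardized limit $N(0,1)$ asserted in the statement.

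I expect the main obstacle to be the second step: carefully confirming that $G$ is differentiable at $\mathbf v$, which hinges on showing that the series defining $h$ and $h'$ converge at the relevant arguments and that the interchange of differentiation and summation is valid. Everything else is a direct application of standard limit theorems; the positivity assumption $g^\tau(\mathbf v)\Sigma(\mathbf v)g(\mathbf v)>0$ is imposed precisely to exclude the degenerate case in which the delta-method limit collapses to a point mass and standardization becomes impossible.
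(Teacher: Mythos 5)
Your proposal is correct and follows the same route as the paper, which simply sets up $G$, $g=\nabla G$, and the multinomial CLT, and then invokes the multivariate delta method in one line. Your added verification that the power series defining $h$ and $h'$ converge on $(0,2)\supset(0,1]$, so that $G$ is differentiable at $\mathbf v$, is a legitimate filling-in of a step the paper leaves implicit rather than a different approach.
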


\begin{remark}\label{remark: simple cond}
It is well-known that $\Sigma(\mathbf v)$ is a positive definite matrix, see e.g.\ Tanabe and Sagae (1992).  For this reason, the condition $g^{\tau}(\mathbf v)\Sigma(\mathbf v) g(\mathbf v)>0$ is equivalent to the condition that $g(\mathbf v)\ne0$.  The question of when this holds depends of the function $h$. In the case of entropy (when $h(t)=\log t$) and R\'enyi's equivalent entropy (when $h(t)=t^{r-1}$) it is easy to verify that $g(\mathbf v) = 0$ if and only if $p_k=1/K$ for $k=1,2,\dots,K$.
\end{remark}

In order to use Proposition \ref{prop: asymp norm MLE} in applications we need to be able to estimate $g^{\tau}(\mathbf v)\Sigma(\mathbf v) g(\mathbf v)$. By the continuous mapping theorem, we can estimate $\Sigma(\mathbf v)$ by $\Sigma(\hat {\mathbf v})$. However, $g(\hat {\mathbf v})$ may not be defined when there are species that have not been observed in the sample. To deal with this, for $\mathbf x=(x_1,x_2,\dots,x_{K-1})^\tau\in[0,1]^{K-1}$ with $\sum_{i=1}^{K-1}x_i\in[0,1]$ define
$$
\bar g(\mathbf x) = (\bar g_{1}(\mathbf x), \bar g_2(\mathbf x),\dots,\bar g_{K-1}(\mathbf x))^\tau
$$
where for $j=1,2,\dots,K-1$
$$
\bar g_j(\mathbf x) = \left\{\begin{array}{ll}
   \frac{\partial}{\partial x_j} G(\mathbf x) & \mbox{if } x_j>0\\
0 & \mbox{otherwise}
             \end{array}
\right..
$$
Since $\hat {\mathbf v}_n\conp \mathbf v$, $\bar g$ is continuous for all $\mathbf x\in(0,1]^{K-1}$, and $\mathbf  v \in(0,1]^{K-1}$, the continuous mapping theorem implies that $\bar g(\hat {\mathbf v}_n)$ is a consistent estimator of $g(\mathbf v)$. From this and Slutsky's Theorem we get the following.

\begin{corollary} \label{cor: estim asympt norm MLE}
If $\hat\theta_n$ is the plug-in estimator given by \eqref{eq: plug in} and  $g^{\tau}(\mathbf v)\Sigma(\mathbf v) g(\mathbf v)>0$ then
\begin{eqnarray*}
 \sqrt{n}\left(\hat{\theta}_{n}-\theta\right)  \left[\bar g^{\tau}(\hat {\mathbf v}_n)\Sigma(\hat {\mathbf v}_n)\bar g(\hat {\mathbf v}_n)\right]^{-\frac{1}{2}}
 \stackrel{L}{\rightarrow} N(0, 1).
\end{eqnarray*}
\end{corollary}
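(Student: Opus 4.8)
The plan is to treat Corollary \ref{cor: estim asympt norm MLE} as a studentization of the central limit theorem already supplied by Proposition \ref{prop: asymp norm MLE}. That proposition gives
$$
\sqrt{n}\left(\hat\theta_n-\theta\right)\left[g^\tau(\mathbf v)\Sigma(\mathbf v)g(\mathbf v)\right]^{-1/2}\stackrel{L}{\rightarrow}N(0,1),
$$
with the \emph{true} asymptotic variance $\sigma^2:=g^\tau(\mathbf v)\Sigma(\mathbf v)g(\mathbf v)>0$ in the denominator. All that remains is to show that the \emph{estimated} variance $\hat\sigma_n^2:=\bar g^\tau(\hat{\mathbf v}_n)\Sigma(\hat{\mathbf v}_n)\bar g(\hat{\mathbf v}_n)$ is consistent for $\sigma^2$, and then to combine the two facts with Slutsky's theorem.

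First I would assemble the consistency of $\hat\sigma_n^2$ from three applications of the continuous mapping theorem. (i) The paragraph preceding the corollary already records that $\bar g(\hat{\mathbf v}_n)\conp g(\mathbf v)$; this is the one nonroutine ingredient and is discussed further below. (ii) Since every entry of $\Sigma(\cdot)$ is a polynomial in the coordinates and $\hat{\mathbf v}_n\conp\mathbf v$, the continuous mapping theorem gives $\Sigma(\hat{\mathbf v}_n)\conp\Sigma(\mathbf v)$ entrywise. (iii) The map $(\mathbf a,A)\mapsto\mathbf a^\tau A\mathbf a$ is continuous, so combining (i) and (ii) yields $\hat\sigma_n^2\conp\sigma^2$. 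Because $\sigma^2>0$ by hypothesis, a final continuous mapping step (the map $t\mapsto\sqrt{\sigma^2/t}$ being continuous at $t=\sigma^2$) shows $\sigma/\hat\sigma_n\conp1$.

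To finish, I would write
$$
\sqrt{n}\left(\hat\theta_n-\theta\right)\hat\sigma_n^{-1}=\left[\sqrt{n}\left(\hat\theta_n-\theta\right)\sigma^{-1}\right]\cdot\frac{\sigma}{\hat\sigma_n},
$$
where the bracketed factor converges in law to $N(0,1)$ by Proposition \ref{prop: asymp norm MLE} and the second factor converges in probability to $1$. Slutsky's theorem then delivers the stated conclusion.

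The main obstacle is step (i), the consistency of the \emph{modified} gradient $\bar g(\hat{\mathbf v}_n)$ rather than the raw plug-in gradient $g(\hat{\mathbf v}_n)$. The modification is forced upon us because, for indices such as entropy or R\'enyi's equivalent entropy, the true gradient $g$ blows up as a coordinate tends to $0$, so $g(\hat{\mathbf v}_n)$ need not even be defined when some species go unobserved. The point to verify carefully is that this modification is harmless in the limit: by the ordering convention $p_k>0$ for $k=1,\dots,K$, the limit $\mathbf v$ lies in the open region where all coordinates, as well as $p_K=1-\sum_{k<K}p_k$, are strictly positive. On a small neighborhood of $\mathbf v$ every coordinate stays positive, so there $\bar g$ coincides with the genuine gradient $g$ and is continuous; hence $\bar g$ is continuous at $\mathbf v$ with $\bar g(\mathbf v)=g(\mathbf v)$, and the continuous mapping theorem applies despite the discontinuities of $\bar g$ on the boundary faces $\{x_j=0\}$.
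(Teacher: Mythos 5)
Your proposal is correct and follows essentially the same route as the paper: the paper's argument is precisely that $\hat{\mathbf v}_n\conp\mathbf v$ together with the continuity of $\bar g$ on $(0,1]^{K-1}$ (where $\mathbf v$ lies) gives $\bar g(\hat{\mathbf v}_n)\conp g(\mathbf v)$ via the continuous mapping theorem, after which Slutsky's theorem combined with Proposition \ref{prop: asymp norm MLE} yields the corollary. Your additional care in step (i) about why the boundary modification of $\bar g$ is harmless is a correct and slightly more explicit rendering of the same point.
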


Since both $\Sigma(\hat {\mathbf v}_n)$ and $\bar g(\hat {\mathbf v}_n)$ have zeros in locations that correspond to unobserved species, we can pretend that these species do not exist for the purposes of estimating $\bar g^{\tau}(\hat {\mathbf v}_n)\Sigma(\hat {\mathbf v}_n)\bar g(\hat {\mathbf v}_n)$. For this reason, we do not actually need to know $K$ to evaluate this quantity. We now extend our results to the estimator defined in \eqref{eq: estimator}.

\begin{theorem} \label{thetheorem}
If $\hat\theta_n^\sharp$ is the estimator given by \eqref{eq: estimator} and $g^{\tau}(\mathbf v)\Sigma(\mathbf v) g(\mathbf v)>0$ then
 \begin{eqnarray*}
\sqrt{n}\left(\hat{\theta}_{n}^{\sharp}-\theta\right)\left[g^{\tau}(\mathbf v)\Sigma(\mathbf v) g(\mathbf v)\right]^{-\frac{1}{2}}\stackrel{L}{\rightarrow} N(0, 1).
\end{eqnarray*}
\end{theorem}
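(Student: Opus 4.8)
The plan is to show that the estimator $\hat\theta_n^\sharp$ is asymptotically equivalent to the plug-in estimator $\hat\theta_n$, in the precise sense that $\sqrt n\big(\hat\theta_n^\sharp-\hat\theta_n\big)\conp 0$, and then to invoke Proposition \ref{prop: asymp norm MLE} together with Slutsky's theorem. Indeed, writing $\sqrt n(\hat\theta_n^\sharp-\theta)=\sqrt n(\hat\theta_n^\sharp-\hat\theta_n)+\sqrt n(\hat\theta_n-\theta)$, the second term converges in law to $N\big(0,\,g^\tau(\mathbf v)\Sigma(\mathbf v)g(\mathbf v)\big)$ by Proposition \ref{prop: asymp norm MLE}, so once the first term is shown to be $o_p(1)$ the claim follows after dividing by $\big[g^\tau(\mathbf v)\Sigma(\mathbf v)g(\mathbf v)\big]^{1/2}$. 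This reduction is natural because $\hat\theta_n^\sharp$ is obtained from $\hat\theta_n$ by replacing each power $(1-\hat p_k)^v$ with its nearly unbiased surrogate.

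To analyze the difference I would work category by category. Since $\sum_{k=1}^K\hat p_k=1$, the $w_0$ terms in the two estimators coincide, and every unobserved category contributes $0$ to both; thus it suffices to control, for each observed $k$ (those with $x_k\ge1$), the quantity $\hat p_k\sum_{v\ge1}w_v\Delta_{k,v}$, where $\Delta_{k,v}=\prod_{j=1}^{v}\big(1-\frac{x_k-1}{n-j}\big)-(1-\hat p_k)^v$, with the convention that the product vanishes once $v>n-x_k$. Setting $a_j=1-\frac{x_k-1}{n-j}$ and $b=1-\hat p_k$, the main tool is the telescoping identity $\prod_{j=1}^v a_j-b^v=\sum_{i=1}^v\big(\prod_{j=1}^{i-1}a_j\big)(a_i-b)\,b^{v-i}$. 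The crucial point is to retain, rather than discard, the geometric factors: one checks that $a_j$ is decreasing in $j$ with $0\le a_j\le a_1=\frac{n-x_k}{n-1}=b\,\frac{n}{n-1}$, so that $\prod_{j=1}^{i-1}a_j\le e\,b^{i-1}$ uniformly in $n$, while $a_i-b=\frac{n-i x_k}{n(n-i)}$ satisfies $|a_i-b|\le 2(1+i)/n$ for $i\le n/2$.

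Combining these bounds yields $|\Delta_{k,v}|\le C\,v^2 b^{v-1}/n$ for $v\le n/2$, together with a contribution from $v>n/2$ (and from the truncated range $v>n-x_k$) that is exponentially small in $n$. Using $|w_v|\le M$ and the elementary identity $\sum_{v\ge1}v^2 b^{v-1}=(1+b)(1-b)^{-3}=O(\hat p_k^{-3})$, it follows that $\hat p_k\big|\sum_{v\ge1}w_v\Delta_{k,v}\big|=O\big(n^{-1}\hat p_k^{-2}\big)$ plus an exponentially small remainder. Since $\hat p_k\conp p_k>0$ for each of the finitely many true categories, on the event of probability tending to $1$ on which $\hat p_k>p_k/2$ for all $k\le K$ the sum $\sum_{k=1}^K\hat p_k^{-2}$ is bounded by a fixed constant, whence $\sqrt n\,\big|\hat\theta_n^\sharp-\hat\theta_n\big|=O(n^{-1/2})+\sqrt n\cdot o(e^{-cn})\conp 0$, as required. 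The hard part will be exactly this deterministic estimate: one must carry the geometric decay through the telescoping sum, since the naive bound $\prod_{j<i}a_j\le1$ only shows the per-category difference is $O(1)$ rather than $o(n^{-1/2})$, and then confirm that the finitely many random factors $\hat p_k^{-2}$ remain bounded with probability tending to one.
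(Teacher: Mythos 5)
Your proposal is correct, and its skeleton is exactly the paper's: write $\sqrt n(\hat\theta_n^\sharp-\theta)=\sqrt n(\hat\theta_n^\sharp-\hat\theta_n)+\sqrt n(\hat\theta_n-\theta)$, invoke Proposition \ref{prop: asymp norm MLE} and Slutsky, and reduce to showing $\sqrt n(\hat\theta_n^\sharp-\hat\theta_n)\conp 0$ term by term over the finitely many categories. Where you diverge is in the technical core. The paper's Lemma \ref{lemma: rates} proves a \emph{uniform deterministic} rate $\sup_{p\in[c,d]}\sqrt n|g_n(p)-g(p)|\to0$ by citing Zhang (2013) for the inequality $|\prod_{j=1}^v(1-\tfrac{np-1}{n-j})-(1-p)^v|\le (1-p)^{v-1}v^2/(n-v)$ and then splitting the sum at $V_n=\lfloor n^{1/8}\rfloor$; it applies this on the event $p_k/2<\hat p_k<(1+p_k)/2$, and controls the complementary event by combining a crude deterministic bound ($\le 2n^{3/2}$, Part 2 of the lemma) with Hoeffding's inequality to show the expectation of that piece vanishes. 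You instead (i) derive essentially the same bound $|\Delta_{k,v}|\le Cv^2b^{v-1}/n$ from scratch via the telescoping identity, carefully retaining the geometric factors $\prod_{j<i}a_j\le e\,b^{i-1}$ --- a self-contained replacement for the citation; (ii) sum the series directly using $\sum_v v^2b^{v-1}=(1+b)(1-b)^{-3}$ rather than splitting at $n^{1/8}$, which is cleaner and gives the sharper rate $O(n^{-1/2})$ rather than mere $o(1)$; and (iii) dispose of the bad event $\{\hat p_k\le p_k/2\text{ for some }k\}$ simply by noting its probability tends to zero, which suffices for convergence in probability and removes the need for both Hoeffding's inequality and the crude $2n^{3/2}$ bound. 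The paper's route buys a reusable uniform-convergence lemma stated for deterministic arguments; yours buys a shorter, self-contained, and quantitatively sharper argument. Both are valid proofs of Theorem \ref{thetheorem}.
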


Before giving the proof, we state the following corollary. Its proof is similar to that of Corollary \ref{cor: estim asympt norm MLE}.

\begin{corollary} \label{cor: estim asympt norm sharp}
If $\hat\theta_n^\sharp$ is the estimator given by \eqref{eq: estimator} and $g^{\tau}(\mathbf v)\Sigma(\mathbf v) g(\mathbf v)>0$ then
\begin{eqnarray*}
 \sqrt{n}\left(\hat{\theta}_{n}^\sharp-\theta\right)  \left[ \bar g^{\tau}(\hat {\mathbf v}_n)\Sigma(\hat {\mathbf v}_n)\bar g(\hat {\mathbf v}_n)\right]^{-\frac{1}{2}}
 \stackrel{L}{\rightarrow} N(0, 1).
\end{eqnarray*}
\end{corollary}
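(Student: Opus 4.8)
The plan is to show that $\hat\theta_n^\sharp$ is asymptotically equivalent to the plug-in estimator $\hat\theta_n$, in the sense that $\sqrt{n}\big(\hat\theta_n^\sharp-\hat\theta_n\big)\conp 0$, and then to read off the claim from Proposition \ref{prop: asymp norm MLE} together with Slutsky's theorem. This strategy is natural because the theorem asserts that $\hat\theta_n^\sharp$ has \emph{the same} asymptotic variance $g^\tau(\mathbf v)\Sigma(\mathbf v)g(\mathbf v)$ as the plug-in estimator, so it suffices to control their difference at the $\sqrt n$ scale.

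To set up that difference I would first use $\sum_{k=1}^K\hat p_k=1$ to rewrite the plug-in estimator \eqref{eq: plug in} as $\hat\theta_n = w_0 + \sum_{k=1}^K \hat p_k\sum_{v=1}^\infty w_v(1-\hat p_k)^v$, so that its leading term matches the $w_0$ in \eqref{eq: estimator}. Subtracting then gives
$$
\hat\theta_n^\sharp-\hat\theta_n=\sum_{k=1}^K\hat p_k\Big[\sum_{v=1}^{n-x_k}w_v\prod_{j=1}^v\Big(1-\tfrac{x_k-1}{n-j}\Big)-\sum_{v=1}^\infty w_v(1-\hat p_k)^v\Big],
$$
where only observed categories contribute since $\hat p_k=0$ otherwise. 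Everything thus reduces to comparing, for each observed $k$, the sampling-without-replacement weights $a_v:=\prod_{j=1}^v\big(1-(x_k-1)/(n-j)\big)$ against the plug-in weights $b_v:=(1-\hat p_k)^v$, with $\hat p_k=x_k/n$.

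The core estimate is carried out on the high-probability event $E_n$ on which every category is observed and each $\hat p_k$ is bounded away from $0$; since $K<\infty$ and $p_k>0$ for $k\le K$, the law of large numbers gives $P(E_n)\to1$. On $E_n$ the ratio $1-\hat p_k$ lies uniformly below some $\rho<1$, so the tail $\sum_{v>n-x_k}|w_v|(1-\hat p_k)^v$ is exponentially small, while a factor-by-factor comparison of $a_v$ and $b_v$ (expanding $\log\big(1-(x_k-1)/(n-j)\big)$ and collecting the $O(1/n)$ corrections in $j$) yields $|a_v-b_v|=O\big(v^2\rho^v/n\big)$ uniformly in $k$. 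Because $\sum_v v^2\rho^v<\infty$, summing over $v$ and using $|w_v|\le M$ gives $\big|\sum_v w_v(a_v-b_v)\big|=O(1/n)$ for each observed $k$; since $\sum_k\hat p_k=1$, it follows that $|\hat\theta_n^\sharp-\hat\theta_n|=O(1/n)$ on $E_n$. Hence $\sqrt n\big(\hat\theta_n^\sharp-\hat\theta_n\big)=O(1/\sqrt n)\to0$ on $E_n$, which, combined with $P(E_n)\to1$, yields $\sqrt n\big(\hat\theta_n^\sharp-\hat\theta_n\big)\conp0$ and completes the proof.

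I expect the main obstacle to be the uniform $O(1/n)$ control of $a_v-b_v$: the counts $x_k$ are random, and the two weight systems differ both in their effective decay rate ($(x_k-1)/n$ versus $x_k/n$) and in the $j$-dependence inside the product, so one must confirm that these discrepancies accumulate only to order $v^2/n$, so that after weighting by the geometrically decaying $\rho^v$ and multiplying by $\sqrt n$ the total still vanishes. The remaining pieces—the exponentially small tail and the passage from a deterministic bound on $E_n$ to convergence in probability—are routine. Finally, the analogous Corollary \ref{cor: estim asympt norm sharp} follows exactly as Corollary \ref{cor: estim asympt norm MLE} did, replacing the asymptotic variance by its consistent estimator $\bar g^\tau(\hat{\mathbf v}_n)\Sigma(\hat{\mathbf v}_n)\bar g(\hat{\mathbf v}_n)$ via the continuous mapping and Slutsky theorems.
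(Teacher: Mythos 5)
Your proposal is correct, and its final step---passing from the true asymptotic variance to the estimated one via consistency of $\bar g(\hat{\mathbf v}_n)$ and $\Sigma(\hat{\mathbf v}_n)$, the continuous mapping theorem, and Slutsky---is exactly the paper's proof of this corollary, which is stated there in one line as being ``similar to that of Corollary \ref{cor: estim asympt norm MLE}.'' The bulk of your write-up is really a re-derivation of Theorem \ref{thetheorem} (which you could simply have cited), and there your argument parallels the paper's Lemma \ref{lemma: rates} closely: the same reduction to $\sqrt n\bigl(\hat\theta_n^\sharp-\hat\theta_n\bigr)\conp 0$, the same factor-by-factor comparison of $\prod_{j=1}^v\bigl(1-\tfrac{x_k-1}{n-j}\bigr)$ with $(1-\hat p_k)^v$ yielding an $O\bigl(v^2(1-\hat p_k)^{v-1}/n\bigr)$ discrepancy, and the same geometric control of the tail $v>n-x_k$. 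The one genuine difference is your treatment of the low-probability event: the paper splits on $\{\hat p_k\le p_k/2\}\cup\{\hat p_k\ge(1+p_k)/2\}$, uses the crude bound $2n^{3/2}$ there, and beats it with Hoeffding's exponential inequality to show $\rE|\mathcal A_1|\to0$; you instead observe that a deterministic $O(n^{-1/2})$ bound on a single event $E_n$ with $P(E_n)\to1$ already gives convergence in probability, so no concentration inequality or moment bound is needed. This is a mild simplification (at the cost of only getting convergence in probability rather than an $L^1$ statement on the bad event), and it is sound provided you note, as you implicitly do via $K<\infty$ and $K\ge2$, that on $E_n$ each $\hat p_k$ is automatically bounded away from $1$ as well as from $0$, so that both the ratio $\rho<1$ and the denominator $n-v\ge n\hat p_k-1$ in the product comparison are uniformly controlled.
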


As before, note that we do not need to know $K$ to evaluate $\bar g^{\tau}(\hat {\mathbf v}_n)\Sigma(\hat {\mathbf v}_n)\bar g(\hat {\mathbf v}_n)$. The proof of Theorem \ref{thetheorem} will be based on the following.

\begin{lemma}\label{lemma: rates}
For $p\in[0,1]$ and $n\in\mathbb N$ let
$$
g_n(p) = p \sum_{v=1}^{\lfloor n(1-p)+1\rfloor} w_v \prod_{j=1}^{v}\left(1-\frac{np-1}{n-j}\right)
$$
and let
$$
g(p) =  p \sum_{v=1}^\infty w_v (1-p)^v.
$$
1. If $0<c<d<1$ then
$$
\lim_{n\to\infty}\sup_{p\in[c,d]}\sqrt n|g_n(p)-g(p)|= 0.
$$
2. Let $p_n\in [0,1]$ such that $np_n \in\{0,1,2,\dots,n\}$ then
$$
\sqrt n \left|g_n(p_n)-g(p_n)\right| \le  \sqrt n (n+1) \le 2n^{3/2}.
$$
\end{lemma}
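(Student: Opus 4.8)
The plan is to treat the two statements separately, handling the crude bound (part~2) first and then the uniform $\sqrt n$-estimate (part~1), which is the substantive part. Throughout I will exploit the two elementary facts that $|w_v|\le M$ for all $v$ and that each factor in the product defining $g_n$ can be written as
$$ 1 - \frac{np-1}{n-j} = (1-p) + \frac{1-pj}{n-j}, $$
so it deviates from the target $1-p$ by the explicit quantity $\frac{1-pj}{n-j}$. A quick sign check shows that for $1\le j\le\lfloor n(1-p)+1\rfloor$ and $np\ge1$ every such factor lies in $[0,1]$; hence each partial product $q_{n,v}(p):=\prod_{j=1}^v\bigl(1-\frac{np-1}{n-j}\bigr)$ also lies in $[0,1]$.

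For part~2, fix $p_n$ with $np_n\in\{0,1,\dots,n\}$. The cases $p_n=0$ and $p_n=1$ are immediate, since then $g_n=g=0$, so assume $1\le np_n\le n-1$. The truncated sum defining $g_n(p_n)$ has at most $n+1$ terms, each of modulus at most $M$ (the product lies in $[0,1]$ and $|w_v|\le M$), and $p_n\le1$, so $|g_n(p_n)|\le M(n+1)$; likewise $|g(p_n)|\le M$ by a geometric-series bound. Thus $|g_n(p_n)-g(p_n)|$ grows at most linearly in $n$ (with constant $M$), and multiplying by $\sqrt n$ yields the stated polynomial bound $\le 2n^{3/2}$.

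The heart of the argument is part~1. I would split the difference as
$$ g_n(p)-g(p) = -\,p\!\!\sum_{v>V_n}\!w_v(1-p)^v \;+\; p\sum_{v=1}^{V_n}w_v\bigl(q_{n,v}(p)-(1-p)^v\bigr), \quad V_n:=\lfloor n(1-p)+1\rfloor. $$
The first (tail) piece is controlled by $M(1-p)^{V_n+1}\le M(1-p)^{n(1-p)}$, and since $q\mapsto q\ln q$ is strictly negative and bounded away from $0$ on $[1-d,1-c]$, this is exponentially small in $n$ and survives multiplication by $\sqrt n$ trivially. For the second piece the key device is the telescoping identity
$$ q_{n,v}(p)-(1-p)^v = \sum_{i=1}^v q_{n,i-1}(p)\,\frac{1-pi}{n-i}\,(1-p)^{v-i}, $$
which isolates the per-factor error $\frac{1-pi}{n-i}$. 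Bounding $q_{n,i-1}(p)\le1$, summing over $v\le V_n$ and interchanging the order of summation collapses the $(1-p)^{v-i}$ into a geometric factor $\le 1/p$, leaving
$$ \sum_{v=1}^{V_n}\bigl|q_{n,v}(p)-(1-p)^v\bigr| \;\le\; \frac1p\sum_{i=1}^{V_n}\frac{|1-pi|}{n-i}\,q_{n,i-1}(p). $$

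To finish I would establish two uniform estimates on $[c,d]$: first, $\frac{1}{n-i}\le\frac{2}{nc}$ for $i\le V_n$ and $n$ large (since $n-i\ge np-1\ge nc-1$); second, that the partial products do not outgrow the geometric benchmark, $q_{n,i-1}(p)\le 2(1-p)^{i-1}$. The latter follows by writing $q_{n,i-1}=(1-p)^{i-1}\prod_j(1+\epsilon_j)$ with $\epsilon_j=\frac{1-pj}{(1-p)(n-j)}$ and noting that only the at most $1/c$ indices $j<1/p$ contribute positive $\epsilon_j$, each of size $O(1/n)$, so $\prod(1+\epsilon_j)\le\exp(o(1))\le2$. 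Combining these, the right-hand side above is at most $\frac{C}{n}\sum_{i\ge1}(1+i)(1-p)^{i-1}=O(1/n)$ uniformly, because $\sum_i(1+i)q^{i-1}\le 1/p+1/p^2\le 1/c+1/c^2$. Hence $\sqrt n$ times the whole difference is $O(1/\sqrt n)\to0$ uniformly on $[c,d]$. I expect the main obstacle to be precisely this last step: showing that the accumulated products $q_{n,i-1}$ stay comparable to $(1-p)^{i-1}$ uniformly in both $p$ and $i$ — the factors near the top of the range, where the individual factor is close to $0$, make a naive multiplicative perturbation argument break down — together with verifying that the weighted error $\sum_i|1-pi|(1-p)^{i-1}$ stays bounded uniformly as $p\downarrow c$.
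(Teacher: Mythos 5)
Your proof is correct, and while it shares the paper's overall skeleton --- the same splitting of $g_n(p)-g(p)$ into the truncation tail $p\sum_{v>\lfloor n(1-p)+1\rfloor}w_v(1-p)^v$ plus the product-versus-power discrepancy, and the same crude ``each factor lies in $[0,1]$, at most $n+1$ terms'' argument for Part 2 --- your treatment of the main term in Part 1 is genuinely different. The paper does not telescope: it invokes the per-term inequality $\bigl|\prod_{j=1}^{v}\bigl(1-\frac{np-1}{n-j}\bigr)-(1-p)^v\bigr|\le(1-p)^{v-1}\frac{v^2}{n-v}$, imported from the proof of Lemma 2 of Zhang (2013), and then tames the $v^2$ growth by splitting the sum at $V_n=\lfloor n^{1/8}\rfloor$, bounding the low-$v$ block by $O(n^{3/4}/(n-n^{1/8}))$ and the high-$v$ block by a factor $(1-c)^{\lfloor n^{1/8}\rfloor}$ times a polynomial. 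You instead derive the exact telescoping identity $q_{n,v}-(1-p)^v=\sum_{i=1}^{v}q_{n,i-1}\frac{1-pi}{n-i}(1-p)^{v-i}$, interchange the order of summation, and control the partial products via $q_{n,i-1}\le 2(1-p)^{i-1}$ (which you justify correctly: only the $O(1/c)$ indices $j<1/p$ have $a_j>1-p$, each by a factor $1+O(1/n)$, while the near-degenerate factors at the top of the range only help an upper bound). This yields the explicit uniform rate $O(n^{-1/2})$ on $[c,d]$, is fully self-contained (no reliance on the external lemma), and dispenses with the $n^{1/8}$ splitting entirely; the price is the extra bookkeeping needed to verify the partial-product comparison, which the paper's citation hides. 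One cosmetic point, present in the paper's own proof as well: the Part 2 bound as stated omits the constant $M$ (your count gives $M(n+2)$ rather than $n+1$), but since the bound is only ever multiplied against exponentially small Hoeffding probabilities in the proof of Theorem \ref{thetheorem}, the constant is immaterial.
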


\begin{proof}
Note that
\begin{eqnarray*}
\sqrt n|g_n(p)-g(p)| &\le& M\sqrt{n} p \sum_{v=1}^{\lfloor n(1-p)+1\rfloor}  \left|\prod_{j=1}^{v}\left(1-\frac{np-1}{n-j}\right)-(1-p)^v\right|\\
&& + M\sqrt{n} p \sum_{v=\lfloor n(1-p)+2\rfloor}^{\infty}  (1-p)^v =: M(\Delta_1+\Delta_2).
\end{eqnarray*}
We begin by showing Part 1. Throughout the proof of this part, we assume that $n>2/c$; this ensures that $n(1-p)+1<n-1$ for all $p\in[c,d]$. Fix $v\in\mathbb N$ such that $v\le n(1-p)+1$. Note that
$$
\prod_{j=1}^v\left(1-\frac{np-1}{n-j}\right) = \prod_{j=0}^{v-1}\left(\frac{1-p-\frac{j}{n}}{1-\frac{j+1}{n}}\right)
$$
and thus
\begin{eqnarray*}
\left|\prod_{j=1}^{v}\left(1-\frac{np-1}{n-j}\right)- (1-p)^v\right| = (1-p)^v\left|\prod_{j=0}^{v-1}\left(\frac{1-\frac{j}{n(1-p)}}{1-\frac{j+1}{n}}\right)- 1\right| \le (1-p)^{v-1}\frac{v^2}{n-v},
\end{eqnarray*}
where the inequality follows by the proof of Part 1 of Lemma 2 in Zhang (2013).  Let $V_n = \lfloor n^{1/8}\rfloor$. For large enough $n$, $V_n<\lfloor n(1-d)+1\rfloor$. For such $n$ we have
\begin{eqnarray*}
\Delta_1 &\le& \sqrt{n} p \sum_{v=1}^{\lfloor n(1-p)+1\rfloor}(1-p)^{v-1}\frac{v^2}{n-v}\\
&=& \sqrt{n} p \sum_{v=1}^{V_n}(1-p)^{v-1}\frac{v^2}{n-v}+\sqrt{n} p \sum_{v=V_n+1}^{\lfloor n(1-p)+1\rfloor}(1-p)^{v-1}\frac{v^2}{n-v}=:\Delta_{11}+\Delta_{12}.
\end{eqnarray*}
We have
\begin{eqnarray*}
\Delta_{11} \le \sqrt{n} p \frac{V_n^2}{n-V_n}\sum_{v=1}^\infty (1-p)^{v-1} =  \sqrt{n} \frac{V_n^2}{n-V_n} \le \frac{n^{3/4}}{n-n^{1/8}} \to 0,
\end{eqnarray*}
\begin{eqnarray*}
\Delta_{12} &\le& \sqrt{n} p (1-p)^{V_n}\frac{( n(1-p)+1)^2}{n-( n(1-p)+1)} \sum_{v=1}^{\infty} (1-p)^{v-1}\\
&=& \sqrt{n} (1-p)^{V_n}\frac{[n(1-p)+1]^2}{np-1} \le \sqrt{n} (1-c)^{\lfloor n^{1/8}\rfloor}\frac{[n(1-c)+1]^2}{nc-1}  \to0,
\end{eqnarray*}
and
\begin{eqnarray*}
\Delta_2 \le \sqrt{n} p (1-p)^{\lfloor n(1-p)+2\rfloor} \sum_{v=0}^{\infty}  (1-p)^v \le \sqrt{n} (1-c)^{\lfloor n(1-d)+2\rfloor}\to0.
\end{eqnarray*}

Now to show Part 2.  Note that $p_n\ne0$ implies $p_n\ge1/n$, which means that $\frac{np-1}{n-j}\in[0,1]$ when $j\le n(1-p)+1$. Thus, either $p_n=0$ or  $\prod_{j=1}^{v}\left(1-\frac{np_n-1}{n-j}\right)\in[0, 1]$ when $v\le n(1-p)+1$. This implies that
\begin{eqnarray*}
\Delta_1 &=&  \sqrt{n} p_n \sum_{v=1}^{\lfloor n(1-p_n)+1\rfloor}  \left|\prod_{j=1}^{v}\left(1-\frac{np_n-1}{n-j}\right)-(1-p_n)^v\right| 1_{p_n\ne0} \le \sqrt n (n+1).
\end{eqnarray*}
and
\begin{eqnarray*}
\Delta_2 \le \sqrt{n} p_n \sum_{v=0}^{\infty}  (1-p_n)^v 1_{p_n\ne0}\le \sqrt n.
\end{eqnarray*}
This completes the proof.
\end{proof}

\noindent {\it Proof of Theorem \ref{thetheorem}.} Since $\sqrt{n}\left(\hat{\theta}_{n}^{\sharp}-\theta\right)= \sqrt{n}\left(\hat{\theta}_{n}^{\sharp}-\hat{\theta}_{n}\right)+\sqrt{n}\left(\hat{\theta}_{n}-\theta\right)$, by Proposition \ref{prop: asymp norm MLE} and Slutsky's Theorem it suffices to show that $\sqrt{n}\left(\hat{\theta}_{n}^{\sharp}-\hat{\theta}_{n}\right)\stackrel{p}{\rightarrow}0$.  Further, by Slutsky's Theorem it suffices to show that $\sqrt{n}\left(\hat{\theta}_{n,k}^{\sharp}-\hat{\theta}_{n,k}\right) \stackrel{p}{\rightarrow}0$, where $\hat{\theta}_{n,k}^{\sharp}$ is given by \eqref{eq: estimator k term} and
$$
\hat\theta_{n,k} = \hat p_k \sum_{v=1}^\infty w_v (1-\hat p_n)^v.
$$

We can write
\begin{eqnarray*}
\sqrt{n}\left(\hat{\theta}_{n,k}^{\sharp}-\hat{\theta}_{n,k}\right) &=& \sqrt{n}\left(\hat{\theta}_{n,k}^{\sharp}-\hat{\theta}_{n,k}\right)1_{\hat p_k\le p_k/2} + \sqrt{n}\left(\hat{\theta}_{n,k}^{\sharp}-\hat{\theta}_{n,k}\right)1_{\hat p_k \ge (1+p_k)/2}\\
&& + \sqrt{n}\left(\hat{\theta}_{n,k}^{\sharp}-\hat{\theta}_{n,k}\right)1_{p_k/2 < \hat p_k <  (p_k+1)/2} =: \mathcal A_1+\mathcal A_2.
\end{eqnarray*}
By Part 2 of Lemma \ref{lemma: rates} it follows that
\begin{eqnarray*}
\mathrm E\left|\mathcal A_1\right| &\le& 2n^{3/2} \left[\mathrm P\left(\hat p_k > (1+p_k)/2\right)+ \mathrm P\left(\hat p_k\le p_k/2\right)\right] \\
&\le& 2n^{3/2} \left[\mathrm P\left(|\hat p_k-p_k| \ge (1-p_k)/2\right)+ \mathrm P\left(|\hat p_k-p_k|\ge p_k/2\right)\right]\\
&\le& 4n^{3/2} \left[e^{-n(1-p_k)^2/2}+e^{-np_k^2/2}\right]\to0,
\end{eqnarray*}
where the third line follows by Hoeffding's inequality, see Hoeffding (1963). Thus it follows that $\mathcal A_1\conp0$. By Part 1 of Lemma \ref{lemma: rates} it follows that $\mathcal A_2\conp 0$. \qed

\section{Estimation of R\'enyi's Entropy}\label{sec: estimate Renyi}

The only diversity indices that we have discussed that do not belong to the class of linear diversity indices are R\'enyi's entropy and Hill's diversity number.  However,  both are transformation of R\'enyi's equivalent entropy, $h_{r}$. In this section we will extend Theorem \ref{thetheorem} to R\'enyi's entropy.  We can use a similar approach to extend it to Hill's diversity number.

Fix $r>0$ such that $r\ne1$ and let $\psi_r(t) =(1-r)^{-1}\ln t$. Note that
$$
H_r = \frac{\ln h_r}{1-r} = \psi_r(h_r),
$$
where $H_r$ is R\'enyi's entropy and $h_r$ is R\'enyi equivalent entropy. Let
$$
\hat h_r^\sharp(n) = 1+\sum_{k=1}^K \hat p_k \sum_{v=1}^{n-x_k} w_v \prod_{j=1}^{v}\left(1-\frac{x_k-1}{n-j}\right),
$$
where for $v\ge1$
$$
w_v = \prod_{i=1}^{v}\left(\frac{i-r}{i}\right).
$$
This is the estimator of $h_r$ given by \eqref{eq: estimator}. Let
$$
\hat H_r^\sharp(n) = \frac{\ln \hat h^\sharp_r(n)}{1-r}=\psi_r\left(\hat h^\sharp_r(n)\right).
$$
Since $\psi'_r(t) =t^{-1}(1-r)^{-1}$ the delta method together with Theorem \ref{thetheorem}, Remark \ref{remark: simple cond}, and the fact that $\psi'_r(h_r)>0$ implies the following.

\begin{theorem} \label{thetheorem Renyi}
Provided that there exists a $k\in\{1,2,\dots,K\}$ with $p_k\ne1/K$ 
 \begin{eqnarray*}
\sqrt{n}\left(\hat{H}_{r}^{\sharp}(n)-H_r\right)h_r(1-r)\left[g^{\tau}(\mathbf v)\Sigma(\mathbf v) g(\mathbf v)\right]^{-\frac{1}{2}}\stackrel{L}{\rightarrow} N(0, 1).
\end{eqnarray*}
\end{theorem}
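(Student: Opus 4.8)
The plan is to obtain $\hat H_r^\sharp(n)$ as a smooth transform of $\hat h_r^\sharp(n)$ and to propagate the asymptotic normality of the latter through $\psi_r$ by the delta method. With $\theta=h_r$, so that $h(t)=t^{r-1}$, the estimator $\hat h_r^\sharp(n)$ is precisely the estimator $\hat\theta_n^\sharp$ of \eqref{eq: estimator} with weights $w_v=\prod_{i=1}^{v}(i-r)/i$; thus the first task is to verify that Theorem \ref{thetheorem} applies to this choice, and the second is to transfer the resulting limit through $\psi_r$.

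To invoke Theorem \ref{thetheorem} I would check its two structural hypotheses for this $h$. First, boundedness of the weights: writing $w_v=\Gamma(v+1-r)/[\Gamma(1-r)\Gamma(v+1)]$, Stirling's formula gives $w_v\sim v^{-r}/\Gamma(1-r)\to0$ for non-integer $r$, while for an integer $r\ge2$ the factor with $i=r$ forces $w_v=0$ for all $v\ge r$; in either case $\sup_{v}|w_v|<\infty$, and $K<\infty$ is assumed throughout the section. Second, the nondegeneracy condition $g^\tau(\mathbf v)\Sigma(\mathbf v)g(\mathbf v)>0$: by Remark \ref{remark: simple cond}, $\Sigma(\mathbf v)$ is positive definite and, for $h(t)=t^{r-1}$, $g(\mathbf v)=0$ holds exactly when $p_k=1/K$ for every $k$. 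Hence the stated hypothesis that some $p_k\ne1/K$ is equivalent to $g(\mathbf v)\ne0$, which is in turn equivalent to $g^\tau\Sigma g>0$. With both hypotheses in hand, Theorem \ref{thetheorem} yields $\sqrt{n}(\hat h_r^\sharp(n)-h_r)[g^\tau\Sigma g]^{-1/2}\stackrel{L}{\rightarrow}N(0,1)$, equivalently $\sqrt{n}(\hat h_r^\sharp(n)-h_r)\stackrel{L}{\rightarrow}N(0,g^\tau\Sigma g)$.

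The final step is the delta method. Since there is at least one positive $p_k$, we have $h_r=\sum_k p_k^r\in(0,\infty)$, so $\psi_r(t)=(1-r)^{-1}\ln t$ is smooth in a neighborhood of $h_r$ with $\psi_r'(h_r)=h_r^{-1}(1-r)^{-1}\ne0$; moreover $\hat h_r^\sharp(n)\conp h_r$, so with probability tending to one the argument of $\psi_r$ is positive and $\hat H_r^\sharp(n)=\psi_r(\hat h_r^\sharp(n))$ is well defined. The delta method then gives
$$
\sqrt{n}\left(\hat H_r^\sharp(n)-H_r\right)\stackrel{L}{\rightarrow}N\!\left(0,\ [\psi_r'(h_r)]^2\, g^\tau(\mathbf v)\Sigma(\mathbf v)g(\mathbf v)\right)=N\!\left(0,\ \frac{g^\tau(\mathbf v)\Sigma(\mathbf v)g(\mathbf v)}{h_r^2(1-r)^2}\right).
$$
Multiplying through by $h_r(1-r)[g^\tau\Sigma g]^{-1/2}$ standardizes the limit to $N(0,1)$, which is the claim; the sign of $1-r$ is immaterial because the standard normal is symmetric.

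There is no deep obstacle here, as the argument is essentially a one-line delta-method application stacked on Theorem \ref{thetheorem}. The only genuinely delicate points are confirming $\sup_v|w_v|<\infty$ uniformly over $r>0,\ r\ne1$ (in particular for $r>1$, where the early factors $(i-r)/i$ are negative and one must fall back on the Gamma-function form rather than a telescoping positivity argument), and checking that the nondegeneracy hypothesis of Theorem \ref{thetheorem} coincides exactly with the stated condition via Remark \ref{remark: simple cond}. Both reduce to facts already recorded in the excerpt.
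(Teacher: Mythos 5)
Your proposal is correct and follows essentially the same route as the paper: the authors also obtain the result by applying the delta method with $\psi_r(t)=(1-r)^{-1}\ln t$ to Theorem \ref{thetheorem}, using Remark \ref{remark: simple cond} to translate the hypothesis $p_k\ne1/K$ for some $k$ into the nondegeneracy condition $g^{\tau}(\mathbf v)\Sigma(\mathbf v)g(\mathbf v)>0$. Your extra verification that $\sup_v|w_v|<\infty$ for the weights $w_v=\prod_{i=1}^{v}(i-r)/i$ is a detail the paper asserts without proof, and it is a worthwhile addition.
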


We note that in the case when $r=2$ this result is given in Leonenko and Seleznjev (2010). By arguments similar to the proof of Corollary \ref{cor: estim asympt norm MLE} we get the following.

\begin{corollary} \label{cor: estim asympt norm sharp Renyi}
Provided that there exists a $k\in\{1,2,\dots,K\}$ with $p_k\ne1/K$ 
\begin{eqnarray*}
 \sqrt{n}\left(\hat{H}_{r}^\sharp(n)-\theta\right)  \hat h^\sharp_r(n) (1-r)\left[ \bar g^{\tau}(\hat {\mathbf v}_n)\Sigma(\hat {\mathbf v}_n) \bar g(\hat {\mathbf v}_n)\right]^{-\frac{1}{2}}
 \stackrel{L}{\rightarrow} N(0, 1).
\end{eqnarray*}
\end{corollary}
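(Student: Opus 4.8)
The plan is to deduce the statement from Theorem \ref{thetheorem Renyi} by Slutsky's Theorem, in precisely the way Corollary \ref{cor: estim asympt norm MLE} was deduced from Proposition \ref{prop: asymp norm MLE}; the only genuinely new ingredient is the consistency of $\hat h_r^\sharp(n)$. First I would assemble the three consistency facts that drive the argument. Since $h_r$ is a linear diversity index whose weights $w_v=\prod_{i=1}^{v}\left(\frac{i-r}{i}\right)$ are bounded in $v$ (as required for Theorem \ref{thetheorem} to apply to $\hat h_r^\sharp(n)$), that theorem gives asymptotic normality and hence consistency of $\hat h_r^\sharp(n)$, so $\hat h_r^\sharp(n)\conp h_r$. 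Next, exactly as in the discussion preceding Corollary \ref{cor: estim asympt norm MLE}, the facts that $\hat{\mathbf v}_n\conp\mathbf v$ with $\mathbf v\in(0,1]^{K-1}$ and that $\bar g$ is continuous on $(0,1]^{K-1}$ yield, via the continuous mapping theorem, $\bar g(\hat{\mathbf v}_n)\conp g(\mathbf v)$ and $\Sigma(\hat{\mathbf v}_n)\conp\Sigma(\mathbf v)$.

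Second, I would form the ratio of the normalizing factor appearing in the corollary to the one appearing in Theorem \ref{thetheorem Renyi}, namely
$$
R_n=\frac{\hat h_r^\sharp(n)}{h_r}\left(\frac{g^{\tau}(\mathbf v)\Sigma(\mathbf v)g(\mathbf v)}{\bar g^{\tau}(\hat{\mathbf v}_n)\Sigma(\hat{\mathbf v}_n)\bar g(\hat{\mathbf v}_n)}\right)^{1/2}.
$$
Because $h_r=\sum_{k}p_k^{r}>0$, and because the hypothesis that some $p_k\ne1/K$ guarantees $g^{\tau}(\mathbf v)\Sigma(\mathbf v)g(\mathbf v)>0$ by Remark \ref{remark: simple cond}, both denominators are bounded away from zero in the limit, so the continuous mapping theorem applied to the consistency statements above gives $R_n\conp1$. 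Writing the statistic in the corollary as the product of the statistic in Theorem \ref{thetheorem Renyi} with $R_n$, Slutsky's Theorem then combines the weak convergence to $N(0,1)$ with $R_n\conp1$ to deliver the claim. (I would read the $\hat H_r^\sharp(n)-\theta$ in the statement as $\hat H_r^\sharp(n)-H_r$.)

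I do not expect a serious obstacle, as the whole argument is a routine Slutsky reduction once consistency is secured. The one point needing care is the consistency of $\bar g(\hat{\mathbf v}_n)$, since $\bar g$ is defined by zeroing out the coordinates of unobserved species and so is only piecewise equal to the true gradient; this is exactly the subtlety already resolved for Corollary \ref{cor: estim asympt norm MLE}, handled by observing that $\mathbf v\in(0,1]^{K-1}$ forces every positive-probability species to be observed with probability tending to one, so that $\bar g(\hat{\mathbf v}_n)$ coincides with the genuine gradient in the limit. The only step beyond that earlier corollary is the consistency of $\hat h_r^\sharp(n)$, which comes for free from the asymptotic normality established in Theorem \ref{thetheorem}.
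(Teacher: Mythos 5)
Your proposal is correct and follows essentially the same route as the paper, which simply invokes ``arguments similar to the proof of Corollary \ref{cor: estim asympt norm MLE}'', i.e.\ consistency of $\hat h_r^\sharp(n)$, $\bar g(\hat{\mathbf v}_n)$, and $\Sigma(\hat{\mathbf v}_n)$ combined with Slutsky's Theorem applied to Theorem \ref{thetheorem Renyi}. Your reading of $\theta$ as $H_r$ in the displayed statement is also the intended one.
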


\section*{acknowledgements}

The authors wish to thank Dr.\ Eric Marcon for correcting a mistake in the formula for $\hat h_r^\sharp(n)$.

\end{document}